%
%
%
%
\documentclass{proc-l}

\newtheorem{theorem}{Theorem}[section]
\newtheorem{lemma}[theorem]{Lemma}
\newtheorem{corollary}[theorem]{Corollary}
\newtheorem{proposition}[theorem]{Proposition}

\theoremstyle{definition}
\newtheorem{definition}[theorem]{Definition}

\theoremstyle{remark}
\newtheorem{remark}[theorem]{Remark}

\numberwithin{equation}{section}



\begin{document}

\title{Discrete fractional calculus and the Saalschutz theorem}

\author{Rui A. C. Ferreira}
\address{Grupo F\'isica-Matem\'atica, Faculdade de Ci\^encias, Universidade de Lisboa,
Av. Prof. Gama Pinto, 2, 1649-003, Lisboa, Portugal.}
\email{raferreira@fc.ul.pt}
\thanks{Rui A. C. Ferreira was supported by the ``Funda\c{c}\~{a}o para a Ci\^encia e a Tecnologia (FCT)" through the program ``Stimulus of Scientific Employment, Individual Support-2017 Call" with reference CEECIND/00640/2017.}

\subjclass[2000]{Primary 39A12 , 33C20}

\date{}

\keywords{Fractional differences, Leibniz rule, Saalschutz's theorem.}

\begin{abstract}
In this work we present a novel proof of the Saalschutz formula by using the theory of discrete fractional calculus. The proofs of some results within this theory, namely, the fractional power rule and the fractional Leibniz rule are revisited.
\end{abstract}

\maketitle

\section{Introduction}

This work had its origin when the author found a (presumably) novel way to obtain the  Saalschutz formula (cf. \cite[pag. 49]{Slater}), namely,

\begin{equation}\label{salsc0}
\frac{(c-a)_m(c-b)_m}{(c)_m(c-a-b)_m}={_3F}_2(a,b,-m;c,1+a+b-c-m;1),
\end{equation}
using the discrete fractional calculus theory. In order to understand \eqref{salsc0}, we introduce the following concepts:

\begin{definition}\label{def1}
The Pochhammer symbol is defined, for $x,y\in\mathbb{R}$, by
$$(x)_y=\left\{\begin{array}{llll}x(x+1)\ldots(x+y-1)\ \mbox{for}\ y\in\mathbb{N}_1, \\
1\ \mbox{for } y=0,\\
\frac{\Gamma(x+y)}{\Gamma(x)}\ \mbox{for }x,x+y\notin\mathbb{N}^0\\
0\ \mbox{for }x\in\mathbb{N}^0\mbox{ and }x+y\notin\mathbb{N}^0.
    \end{array}\right.
$$
\end{definition}

\begin{definition}
The function ${_3F_2}$ above, known as a hypergeometric function, is defined by
$${_3F_2}(a_1,a_2,a_3;b_1,b_2;z)=\sum_{k=0}^\infty\frac{(a_1)_k(a_2)_k(a_3)_k}{(b_1)_k(b_2)_k}\frac{z^n}{n!},$$
when the series converges.
\end{definition}
Meanwhile, while deriving \eqref{salsc0}, we recast some results already known in the literature, namely the \emph{fractional power rule} and the \emph{fractional Leibniz rule}. In particular, we will present novel proofs for the fractional power rule and for the Leibniz rule respectively, pointing out some inconsistencies that we detected in the process (cf. Remark \ref{rem000} and Remark \ref{rem0001} below).

In this work we use the Miller--Ross fractional sum-difference operator introduced in 1988 in \cite{MillerRoss}. We give its definition in the next section, more precisely, in Definition \ref{deffracsum}. For this operator, we present and prove fractional power rules as well as the fractional Leibniz rule. These are the key ingredients to obtain the equality \eqref{salsc0}, though with some restrictions in the parameters.

The theory of discrete fractional calculus and fractional difference equations has been developed in recent years in several directions and is currently a hot topic of research (cf. for example \cite{Atici2,Ferreira1,Ferreira3,GoodrichBook,Goodrich,Lizama}). This work aims at contributing to the development of the theory, namely, by showing how ancient results may be obtained from manipulating with (discrete) fractional calculus formulas.

\section{Definitions and preliminary results}

In this section we introduce some of the concepts used within the discrete fractional calculus (mainly for the ``delta" $\Delta$-case) theory as well as some basic facts about it.

For a number $a\in\mathbb{R}$ we put $\mathbb{N}_a=\{a,a+1,\ldots\}$ and $\mathbb{N}^a=\{\ldots,a-1,a\}$. Sometimes we will also write $\mathbb{N}_a^b=\{a,a+1,\ldots,b\}$, where $b=a+k$ with $k\in\mathbb{N}_1$. 

\begin{definition}
Consider a function $f:\mathbb{N}_a\to\mathbb{R}$.
The forward difference operator\index{Forward difference operator} is defined by $\Delta [f](t)=f(t+1)-f(t)$, for $t\in\mathbb{N}_a$. Also, we define higher order differences recursively as $\Delta^n [f] (t)=\Delta [\Delta^{n-1} f] (t)$, $n\in\mathbb{N}_1$, where $\Delta^0$ is the identity operator, i.e. $\Delta^0 f(t)=f(t)$.
\end{definition}

\begin{definition}\label{def1}
The falling function is defined, for $x,y\in\mathbb{R}$, by
$$(x)^{\underline{y}}=\left\{\begin{array}{llll}x(x-1)\ldots(x-y+1)\ \mbox{for}\ y\in\mathbb{N}_1, \\
1\ \mbox{for } y=0,\\
\frac{\Gamma(x+1)}{\Gamma(x+1-y)}\ \mbox{for }x,x-y\notin\mathbb{N}^{-1}\\
0\ \mbox{for }x\notin\mathbb{N}^{-1}\mbox{ and }x-y\in\mathbb{N}^{-1}.
    \end{array}\right.
$$
\end{definition}
When they are defined, the following formulas
\begin{equation}\label{eq1}
   t^{\underline{\alpha+\beta}}=(t-\beta)^{\underline{\alpha}}t^{\beta},
\end{equation}
$$\Delta[s^{\underline{\alpha}}](t)=\alpha t^{\underline{\alpha-1}},$$
and
\begin{equation*}
  (t+\alpha-1)^{\underline{\alpha}}=(t)_{\alpha},
\end{equation*}
hold, and will be widely used throughout this manuscript

We now introduce the Miller and Ross fractional operator:

\begin{definition}\label{deffracsum}
Let $a\in\mathbb{R}$, $\nu\in\mathbb{R}\backslash\mathbb{N}^{0}$ and $f:\mathbb{N}_a\to\mathbb{R}$. Then the fractional sum-difference operator of order $\nu$ is defined by
\begin{equation}\label{fractsumdiff}
    \Delta_a^{-\nu}f(t)=\frac{1}{\Gamma(\nu)}\sum_{s=a}^{t-\nu}(t-(s+1))^{\underline{\nu-1}}f(s),\quad t\in\mathbb{N}_{a+\nu}.
\end{equation}
\end{definition}
Miller and Ross called \eqref{fractsumdiff} the fractional sum of order $\nu$ if $\nu>0$. We use here the nomenclature ``sum-difference" to include the ``diferences", that is, when one is considering $\nu<0$.

It is worth mentioning that Atici and Eloe (who built on the work of Miller and Ross roughly 20 years later \cite{Atici1}) defined the fractional sum of order $\nu>0$ by \eqref{fractsumdiff} but defined the fractional difference of order $\mu>0$ by (in analogy with the Riemann--Liouville fractional derivative)
$$\Delta_a^\mu f(t)=\Delta^n[\Delta_a^{-(n-\mu)}f](t),\quad t\in\mathbb{N}_{a+n-\mu},$$
where $n\in\mathbb{N}_1$ is such that $n-1<\mu\leq n$. Let us briefly describe the difference between both concepts. To simplify, consider $0<\mu<1$. Then, the Miller--Ross fractional difference of order $\mu$ is
\begin{equation}\label{fractsumdiff11}
    {^{M-R}\Delta}_a^{\mu}f(t)=\frac{1}{\Gamma(-\mu)}\sum_{s=a}^{t+\mu}(t-(s+1))^{\underline{-\mu-1}}f(s),\quad t\in\mathbb{N}_{a-\mu},
\end{equation}
while the Atici--Eloe one is given by
\begin{equation}\label{A-E}
    {^{A-E}\Delta}_a^\mu f(t)=\frac{\Delta}{\Gamma(1-\mu)}\sum_{s=a}^{t+\mu-1}(t-(s+1))^{\underline{-\mu}}f(s),\quad t\in\mathbb{N}_{a+1-\mu}.
\end{equation}
Let us expand \eqref{A-E}. We have ($\Delta_t$ stands for the derivative with respect to $t$),
\begin{align*}
    {^{A-E}\Delta}_a^\mu f(t)&=\frac{1}{\Gamma(1-\mu)}\left(\sum_{s=a}^{t+\mu}(t-s)^{\underline{-\mu}}f(s)-\sum_{s=a}^{t+\mu-1}(t-s-1)^{\underline{-\mu}}f(s)\right)\\
    &=\frac{1}{\Gamma(1-\mu)}\left(\sum_{s=a}^{t+\mu-1}\Delta_t(t-s-1)^{\underline{-\mu}}f(s)+\Gamma(1-\mu)f(t+\mu)\right)\\
    &=\frac{1}{\Gamma(-\mu)}\sum_{s=a}^{t+\mu-1}(t-s-1)^{\underline{-\mu-1}}f(s)+f(t+\mu)\\
    &=\frac{1}{\Gamma(-\mu)}\sum_{s=a}^{t+\mu}(t-s-1)^{\underline{-\mu-1}}f(s),\quad t\in\mathbb{N}_{a+1-\mu}.
\end{align*}
Therefore, we may say that the Miller--Ross fractional difference and the Atici--Eloe fractional difference coincide but have different domains of definition.

\section{Main results}

We start with a binomial-type formula for which, for completeness, we present a proof.

\begin{lemma}[Discrete analogue of the Binomial Theorem]
Let $x,y\in\mathbb{R}$ and $n\in\mathbb{N}_0$. Then
\begin{equation}\label{eq0}
    (x+y)^{\underline{n}}=\sum_{k=0}^{n}\binom{n}{k}x^{\underline{n-k}}y^{\underline{k}}.
\end{equation}
\end{lemma}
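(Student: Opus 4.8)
The plan is to argue by induction on $n$, using the factoring identity for falling functions together with Pascal's rule for binomial coefficients. For the base case $n=0$ both sides of \eqref{eq0} reduce to $1$, so there is nothing to prove.

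For the inductive step, I would assume \eqref{eq0} holds for a given $n\in\mathbb{N}_0$ and aim to establish it for $n+1$. The starting point is the elementary factoring $(z)^{\underline{m+1}}=(z-m)(z)^{\underline{m}}$, which is exactly the special case $\alpha=1$, $\beta=m$ of \eqref{eq1}. Applying it with $z=x+y$ and $m=n$ gives $(x+y)^{\underline{n+1}}=(x+y-n)(x+y)^{\underline{n}}$, and after inserting the induction hypothesis I would rewrite the scalar factor as $x+y-n=(x-(n-k))+(y-k)$ inside the $k$-th term of the sum. The point of this splitting is that $(x-(n-k))\,x^{\underline{n-k}}=x^{\underline{n-k+1}}$ and $(y-k)\,y^{\underline{k}}=y^{\underline{k+1}}$, again by the same factoring identity, so that
\begin{equation*}
(x+y)^{\underline{n+1}}=\sum_{k=0}^{n}\binom{n}{k}\left(x^{\underline{n+1-k}}y^{\underline{k}}+x^{\underline{n-k}}y^{\underline{k+1}}\right).
\end{equation*}

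To finish I would collect the coefficient of $x^{\underline{n+1-k}}y^{\underline{k}}$. Shifting the index by one in the second sum turns its contribution into $\binom{n}{k-1}$, so that the total coefficient becomes $\binom{n}{k}+\binom{n}{k-1}=\binom{n+1}{k}$ by Pascal's rule; this yields exactly \eqref{eq0} with $n$ replaced by $n+1$. The only genuinely delicate point is the bookkeeping at the two endpoints: the first sum supplies no term at $k=n+1$ and the second supplies none at $k=0$, which must be reconciled with the convention $\binom{n}{-1}=\binom{n}{n+1}=0$ so that Pascal's rule can be applied uniformly across the full range $k=0,\dots,n+1$. Everything else is routine manipulation of finite sums.
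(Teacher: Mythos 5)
Your proposal is correct and is essentially the paper's own argument: induction on $n$, the factoring identity $(z)^{\underline{m+1}}=(z-m)(z)^{\underline{m}}$ (the case $\alpha=1$ of \eqref{eq1}), and Pascal's rule, with the only difference being that you run the chain of equalities from $(x+y)^{\underline{n+1}}$ toward the sum while the paper starts from the sum $\sum_{k=0}^{n+1}\binom{n+1}{k}x^{\underline{n+1-k}}y^{\underline{k}}$ and collapses it to $(x+y-n)(x+y)^{\underline{n}}$. Your attention to the endpoint conventions $\binom{n}{-1}=\binom{n}{n+1}=0$ handles exactly the bookkeeping the paper absorbs by peeling off the $x^{\underline{n+1}}$ and $y^{\underline{n+1}}$ terms separately.
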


\begin{proof}
We use induction on $n$. The case $n=0$ is obvious. Now, suppose \eqref{eq0} holds for any real numbers $x,y$. Then,
\begin{align*}
    &\sum_{k=0}^{n+1}\binom{n+1}{k}x^{\underline{n+1-k}}y^{\underline{k}}=x^{\underline{n+1}}+y^{\underline{n+1}}
    +\sum_{k=1}^{n}\left[\binom{n}{k}+\binom{n}{k-1}\right]x^{\underline{n+1-k}}y^{\underline{k}}\\
    &=\sum_{k=0}^{n}\binom{n}{k}x^{\underline{n+1-k}}y^{\underline{k}}+\sum_{k=0}^{n-1}\binom{n}{k}x^{\underline{n-k}}y^{\underline{k+1}}+y^{\underline{n+1}}\\
    &=\sum_{k=0}^{n}\binom{n}{k}x^{\underline{n-k}}(x-n+k)y^{\underline{k}}+\sum_{k=0}^{n}\binom{n}{k}x^{\underline{n-k}}y^{\underline{k}}(y-k)\\
   &=(x+y-n)(x+y)^{\underline{n}}=(x+y)^{\underline{n+1
    }},
\end{align*}
where we have used \eqref{eq1} repeatedly. The proof is done.
\end{proof}

\begin{remark}
It can also be shown that,
\begin{equation}\label{PochBinom}
    (x+y)_{n}=\sum_{k=0}^{n}\binom{n}{k}(x)_{n-k}(y)_{k},
\end{equation}
for all $x,y\in\mathbb{R}$ and $n\in\mathbb{N}_0$.
\end{remark}
We now proceed to present the fractional power rule. The proof is inspired by the work of Gray and Zhang \cite{Gray}.

\begin{theorem}\label{sumPR}
Let $a\in\mathbb{R}$. Assume $\mu\in\mathbb{R}\backslash\mathbb{N}^{-1}$ and $\nu\in\mathbb{R}\backslash\mathbb{N}^0$. Then
\begin{equation}\label{FPSS}
    \Delta_{a+\mu}^{-\nu}[(s-a)^{\underline{\mu}}](t)=\frac{\Gamma(\mu+1)}{(t-a-\mu-\nu)!}(\mu+\nu+1)_{t-a-\mu-\nu},\mbox{ for } t\in\mathbb{N}_{a+\mu+\nu}.
\end{equation}
\end{theorem}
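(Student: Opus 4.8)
The plan is to evaluate the defining sum in \eqref{fractsumdiff} directly and to recognise the resulting finite sum as a Vandermonde-type convolution. First I would write out the operator with lower limit $a+\mu$ acting on $f(s)=(s-a)^{\underline{\mu}}$, namely
$$\Delta_{a+\mu}^{-\nu}[(s-a)^{\underline{\mu}}](t)=\frac{1}{\Gamma(\nu)}\sum_{s=a+\mu}^{t-\nu}(t-s-1)^{\underline{\nu-1}}(s-a)^{\underline{\mu}}.$$
Since $t\in\mathbb{N}_{a+\mu+\nu}$, the quantity $N:=t-a-\mu-\nu$ is a nonnegative integer, so the substitution $s=a+\mu+k$ turns this into a finite sum over $k\in\{0,1,\ldots,N\}$ whose summand is $(\nu+N-k-1)^{\underline{\nu-1}}(\mu+k)^{\underline{\mu}}$.

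Next I would rewrite each falling factorial in Gamma-function form using the falling-function definition (Definition \ref{def1}): $(\mu+k)^{\underline{\mu}}=\Gamma(\mu+k+1)/k!$ and $(\nu+N-k-1)^{\underline{\nu-1}}=\Gamma(\nu+N-k)/(N-k)!$. Writing $\Gamma(\mu+k+1)=\Gamma(\mu+1)\,(\mu+1)_k$ and combining the prefactor $1/\Gamma(\nu)$ with $\Gamma(\nu+N-k)$ to form $(\nu)_{N-k}$ produces the Pochhammer form
$$\Delta_{a+\mu}^{-\nu}[(s-a)^{\underline{\mu}}](t)=\Gamma(\mu+1)\sum_{k=0}^{N}\frac{(\nu)_{N-k}}{(N-k)!}\,\frac{(\mu+1)_{k}}{k!}.$$

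Finally, the sum above is precisely the Vandermonde convolution encoded by the Pochhammer binomial theorem \eqref{PochBinom}: taking $x=\nu$, $y=\mu+1$ and $n=N$ there and dividing by $N!$ gives $\sum_{k=0}^{N}\frac{(\nu)_{N-k}}{(N-k)!}\frac{(\mu+1)_{k}}{k!}=\frac{(\mu+\nu+1)_{N}}{N!}$. Substituting back $N=t-a-\mu-\nu$ then yields the claimed identity \eqref{FPSS}.

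I expect the only delicate point to be the passage to Gamma-function form. I must check that the hypotheses $\mu\in\mathbb{R}\backslash\mathbb{N}^{-1}$ and $\nu\in\mathbb{R}\backslash\mathbb{N}^{0}$, together with the range $0\le k\le N$, keep the arguments $\mu+k$, $k$, $\nu+N-k-1$ and $N-k$ off the vanishing and pole branches in Definition \ref{def1}, so that the representations above are legitimate term by term. Once the sum has been brought to convolution form the collapse is immediate, so this verification of admissibility is really where the hypotheses earn their keep; the remaining steps are routine reindexing.
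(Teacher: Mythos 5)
Your proposal is correct and follows essentially the same route as the paper's own proof: reindex the defining sum so that it runs over $k=0,\ldots,t-a-\mu-\nu$, convert the falling factorials to Gamma-function (Pochhammer) form, and collapse the resulting convolution via the Pochhammer binomial identity \eqref{PochBinom} with $x=\nu$, $y=\mu+1$. The only difference is cosmetic (your $N$ is the paper's $n$), so there is nothing further to add.
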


\begin{proof}
Let $t\in\mathbb{N}_{a+\mu+\nu}$. Then
\begin{align*}
     \Delta_{a+\mu}^{-\nu}[(s-a)^{\underline{\mu}}](t)&=\frac{1}{\Gamma(\nu)}\sum_{s=a+\mu}^{t-\nu}(t-(s+1))^{\underline{\nu-1}}(s-a)^{\underline{\mu}}\\
     &=\sum_{s=a+\mu}^{t-\nu}\frac{1}{\Gamma(\nu)}\frac{\Gamma(t-s)}{\Gamma(t-s+1-\nu)}\frac{\Gamma(s-a+1)}{\Gamma(s-a+1-\mu)}\\
     &=\sum_{s=0}^{t-a-\mu-\nu}\frac{1}{\Gamma(\nu)}\frac{\Gamma(t-s-a-\mu)}{\Gamma(t-s-a-\mu+1-\nu)}\frac{\Gamma(s+\mu+1)}{\Gamma(s+1)}.
\end{align*}
Put $n=t-a-\mu-\nu\in\mathbb{N}_0$. Then we get from the previous equality,
\begin{align*}
     \Delta_{a+\mu}^{-\nu}[(s-a)^{\underline{\mu}}](t)
     &=\sum_{s=0}^{n}\frac{1}{\Gamma(\nu)}\frac{\Gamma(n-s+\nu)}{\Gamma(n-s+1)}\frac{\Gamma(s+\mu+1)}{\Gamma(s+1)}\\
     &=\frac{\Gamma(\mu+1)}{n!}\sum_{s=0}^{n}\frac{n!}{(n-s)!s!}\frac{\Gamma(n-s+\nu)}{\Gamma(\nu)}\frac{\Gamma(s+\mu+1)}{\Gamma(\mu+1)}\\
     &=\frac{\Gamma(\mu+1)}{n!}\sum_{s=0}^{n}\binom{n}{s}(\nu)_{n-s}(\mu+1)_s\\
     &=\frac{\Gamma(\mu+1)}{n!}(\mu+\nu+1)_n,
\end{align*}
where we have used \eqref{PochBinom}. Finally, substituting $n$ by $t-a-\mu-\nu$, we obtain \eqref{FPSS}.
\end{proof}

\begin{corollary}\label{Cor1}
Let $a\in\mathbb{R}$. Assume $\mu\in\mathbb{R}\backslash\mathbb{N}^{-1}$ and $\nu\in\mathbb{R}\backslash\mathbb{N}^0$.

If $\mu+\nu\notin\mathbb{N}^{-1}$, then
\begin{equation}\label{FPS11}
    \Delta_{a+\mu}^{-\nu}[(s-a)^{\underline{\mu}}](t)=\frac{\Gamma(\mu+1)}{\Gamma(\mu+\nu+1)}(t-a)^{\underline{\mu+\nu}},\mbox{ for } t\in\mathbb{N}_{a+\mu+\nu},
\end{equation}
while, if $\mu+\nu\in\mathbb{N}^{-1}$, then
\begin{equation}\label{FPS12}
\Delta_{a+\mu}^{-\nu}[(s-a)^{\underline{\mu}}](t)=0,\mbox{ for } t\in\mathbb{N}_{a}.
\end{equation}
\end{corollary}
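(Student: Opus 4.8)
The plan is to start directly from the closed form established in Theorem~\ref{sumPR} and convert its right-hand side into the two claimed expressions according to the arithmetic nature of $\mu+\nu$. For $t\in\mathbb{N}_{a+\mu+\nu}$ set $n=t-a-\mu-\nu\in\mathbb{N}_0$; then Theorem~\ref{sumPR} reads
\[
\Delta_{a+\mu}^{-\nu}[(s-a)^{\underline{\mu}}](t)=\frac{\Gamma(\mu+1)}{n!}(\mu+\nu+1)_n,
\]
so the whole argument reduces to evaluating the Pochhammer symbol $(\mu+\nu+1)_n$ in each of the two regimes.

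For the case $\mu+\nu\notin\mathbb{N}^{-1}$, I would first note that the base satisfies $\mu+\nu+1\notin\mathbb{N}^{0}$ (which is exactly the hypothesis rewritten), and that $\mu+\nu+1+n=t-a+1\notin\mathbb{N}^{0}$ as well, because $t=a+\mu+\nu+k$ with $k\in\mathbb{N}_0$ forces $t-a+1$ to be either non-integer or at least $1$. Hence the Gamma-quotient branch of the Pochhammer symbol applies and gives $(\mu+\nu+1)_n=\Gamma(t-a+1)/\Gamma(\mu+\nu+1)$. Writing $n!=\Gamma(t-a-\mu-\nu+1)$ and recognising $(t-a)^{\underline{\mu+\nu}}=\Gamma(t-a+1)/\Gamma(t-a-\mu-\nu+1)$ from the falling-function definition, a cancellation of $\Gamma(t-a+1)$ yields \eqref{FPS11}. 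The same conclusion can be reached purely formally through the identity $(t)_{\alpha}=(t+\alpha-1)^{\underline{\alpha}}$ recorded in the preliminaries.

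The more delicate case is $\mu+\nu\in\mathbb{N}^{-1}$, and here I expect the main point to be the change of domain from $\mathbb{N}_{a+\mu+\nu}$ to $\mathbb{N}_{a}$. Writing $\mu+\nu=-m$ with $m\in\mathbb{N}_1$, the base becomes $\mu+\nu+1=1-m$, a non-positive integer, while the exponent is $n=t-a+m$. For $t\in\mathbb{N}_{a}$ one has $n\geq m\geq 1$, so the product form $(1-m)_n=(1-m)(2-m)\cdots(n-m)$ is in force and necessarily contains the factor $m-m=0$; therefore $(\mu+\nu+1)_n=0$ and \eqref{FPS12} follows. The step worth stressing is that this vanishing relies on $n\geq m$, which is guaranteed precisely by restricting $t$ to $\mathbb{N}_{a}$ rather than to the full domain $\mathbb{N}_{a+\mu+\nu}=\mathbb{N}_{a-m}$: for $a-m\leq t<a$ the exponent $n$ lies in $\{0,\dots,m-1\}$, the symbol $(1-m)_n$ is then nonzero, and \eqref{FPS12} would genuinely fail there.
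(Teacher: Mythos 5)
Your proposal is correct and follows essentially the same route as the paper: both start from the closed form \eqref{FPSS} of Theorem~\ref{sumPR} and simply evaluate the Pochhammer symbol $(\mu+\nu+1)_{t-a-\mu-\nu}$ via Definition~\ref{def1}, using the Gamma-quotient branch when $\mu+\nu\notin\mathbb{N}^{-1}$ and the vanishing of the symbol when $\mu+\nu\in\mathbb{N}^{-1}$ and $t\in\mathbb{N}_a$. Your extra observation about why the domain must shrink to $\mathbb{N}_a$ in the degenerate case (the zero factor requires $n\geq m$) is a correct elaboration of a point the paper leaves implicit.
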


\begin{proof}
If $\mu+\nu\notin\mathbb{N}^{-1}$, then 
$$(\mu+\nu+1)_{t-a-\mu-\nu}=\frac{\Gamma(t-a+1)}{\Gamma(\mu+\nu+1)},\quad t\in\mathbb{N}_{a+\mu+\nu}.$$
Hence, \eqref{FPSS} becomes
$$\Delta_{a+\mu}^{-\nu}[(s-a)^{\underline{\mu}}](t)=\frac{\Gamma(\mu+1)}{\Gamma(\mu+\nu+1)}\frac{\Gamma(t-a+1)}{\Gamma(t-a+1-\mu-\nu)},$$
which is just \eqref{FPS11}.

If $\mu+\nu\in\mathbb{N}^{-1}$ then, using Definition \ref{def1}, $(\mu+\nu+1)_{t-a-\mu-\nu}=0$ for $t\in\mathbb{N}_a$, and this concludes the proof.
\end{proof}

\begin{remark}
Note that, for the values of $t$ such that the function $f(t)=(t-a)^{\underline{\mu}}$ is well-defined, we have $\Delta^m f(t)=\mu^{\underline{m}}(t-a)^{\underline{\mu-m}}$, for $m\in\mathbb{N}_1$. Formally, this formula is in accordance with \eqref{FPS11}.
\end{remark}

\begin{remark}\label{rem000}
The formula \eqref{FPS11} is in many contexts presented assuming only that $\mu\in\mathbb{R}\backslash\mathbb{N}^{-1}$ (cf., e.g.,  \cite[Lemma 2.3.]{Atici1} and \cite[Lemma 3.1.]{Holm}). However, the authors usually present the falling function as
$$t^{\underline{\nu}}=\frac{\Gamma(t+1)}{\Gamma(t+1-\nu)},$$
for any $t,\ \nu\in\mathbb{R}$ for which the right-hand side is well-defined, which is rather vague. In Corollary \ref{Cor1} we clearly stated the set of values for the parameters $a,\mu,\nu$ in which equality \eqref{FPS11} is valid.
\end{remark}

\begin{remark}
Some interesting consequences may be extracted from Corollary \ref{Cor1}. Under the conditions of it, consider $a=0$, $\mu+\nu\in\mathbb{N}^{-1}$, and fix equality \eqref{FPS12}. We then have,
$$\frac{1}{\Gamma(\nu)}\sum_{k=\mu}^{t-\nu}\frac{\Gamma(t-k)}{\Gamma(t-k+1-\nu)}\frac{\Gamma(k+1)}{\Gamma(k+1-\mu)}=0,\quad t\in\mathbb{N}_0,$$
which is equivalent to
$$\sum_{k=0}^{t-(\mu+\nu)}\frac{\Gamma(t-(k+\mu))}{\Gamma(t-(k+\mu)+1-\nu)}\frac{\Gamma(k+\mu+1)}{\Gamma(k+\mu+1-\mu)}=0.$$
Let $n=t-(\mu+\nu)$ and note that $n\in\mathbb{N}_{-(\mu+\nu)}$. It follows from the previous equality, 
$$\sum_{k=0}^{n}\frac{\Gamma(n+\nu-k)}{(n-k)!}\frac{\Gamma(k+\mu+1)}{k!}=0,$$
or
$$\sum_{k=0}^{n}\binom{n}{k}\Gamma(n+\nu-k)\Gamma(k+\mu+1)=0,\ n\in\mathbb{N}_{-(\mu+\nu)}.$$
\end{remark}

\begin{remark}\label{rem0001}
A word of caution: in the context of discrete fractional calculus it is essential to keep track of the domains of the fractional operators involved or one may be lead to some inconsistencies.  As an example, we now consider \cite[Property 6]{Gray}. There, it is \emph{shown} that (with a slight change in notation)
$$\nabla_{a+1}^\alpha[(s-a)_p](t)=0.$$
Nothing is written about the domain of the nabla operator $\nabla^\alpha_{a+1}$ (cf. \eqref{nabla1} below). When proving \cite[Property 6--(i)]{Gray} the authors use the following definition for $\nabla^\alpha_{a+1}$ (which seems to derive from \cite[Property 1]{Gray}):
\begin{equation}\label{nabla1}
\nabla_{a+1}^\alpha[(s-a)_p](t)=\frac{1}{\Gamma(-\alpha)}\sum_{j=a+1}^t(t-j+1)_{-\alpha-1}(j-a)_p.
\end{equation}
From \cite[Definition 2]{Gray} one can only assume that $t\in\mathbb{N}_{a+1}$. However, if we consider \cite[Example 1]{Gray}, we get,
$$\nabla_{1}^{3/2}[(s)_{1/2}](1)=\frac{1}{\Gamma(-\frac{3}{2})}\sum_{j=1}^1(1-j+1)_{-\frac{3}{2}-1}(j)_{\frac{1}{2}}=\Gamma\left(\frac{3}{2}\right)$$
and not zero as stated therein.

We point out that we may derive a correct formula for the nabla case, i.e. for \eqref{nabla1}, by using \eqref{FPS12}. Indeed, for $\mu+\nu=-m$ with $m\in\mathbb{N}_1$ and $t\in\mathbb{N}_a$, we have
\begin{align*}
    &\sum_{s=a+\mu}^{t-\nu}(t-s-1)^{\underline{\nu-1}}(s-a)^{\underline{\mu}}=0\\
    \Leftrightarrow &\sum_{s=a}^{t-\mu-\nu}(t-s-\mu-1)^{\underline{\nu-1}}(s-a+\mu)^{\underline{\mu}}=0\\
     \Leftrightarrow &\sum_{s=a}^{t+m}(t-s+1+m)_{\nu-1}(s-a+1)_{\mu}=0\\
        \Leftrightarrow &\sum_{s=a+1}^{t+m+1}(t+m+1-s+1)_{\nu-1}(s-a)_{\mu}=0.
\end{align*}
Therefore,
$$\frac{1}{\Gamma(-\alpha)}\sum_{j=a+1}^{t}(t-j+1)_{-\alpha-1}(j-a)_{p}=0,\quad t\in\mathbb{N}_{a+1+m},\ \alpha-p=m\in\mathbb{N}_1.$$
To finalize this issue we test the previous formula with $a=0$, $\alpha=3/2$, $p=1/2$ (hence, $m=1$), and $t=2$:
$$\sum_{j=1}^{2}(2-j+1)_{-3/2-1}(j)_{1/2}=(2)_{-5/2}(1)_{1/2}+(1)_{-5/2}(2)_{1/2}=0.$$
\end{remark}
Now we need a preparation lemma for the Leibniz rule.

\begin{lemma}\label{lem3.18-1}
Suppose that $g:\mathbb{N}_a\to\mathbb{R}$ and $k\in\mathbb{N}_0$, $\alpha\in\mathbb{R}$. Then
$$\sum_{n=0}^{k}(-1)^n\binom{k}{n}\Delta^ng(t-\alpha-n)=g(t-\alpha-k),\quad t\in\mathbb{N}_{a+\alpha+k}.$$
\end{lemma}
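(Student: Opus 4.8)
The plan is to recognise the alternating binomial sum as a telescoping operator identity. Write $\Delta=E-I$, where $I$ is the identity and $E$ denotes the unit forward shift $Eg(x)=g(x+1)$, and introduce the backward difference $\nabla g(x)=g(x)-g(x-1)=(I-E^{-1})g(x)$. The first step is to establish the pointwise identity
$$\Delta^n g(x-n)=\nabla^n g(x).$$
This holds because evaluating $\Delta^n g$ at $x-n$ is the same as applying $E^{-n}(E-I)^n$ at $x$; since all of these operators are Laurent polynomials in the single shift $E$ they commute, so $E^{-n}(E-I)^n=(E^{-1}(E-I))^n=(I-E^{-1})^n=\nabla^n$. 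As these are finite linear combinations of shifts there are no convergence issues and the manipulation is purely formal bookkeeping. I would first verify the cases $n=1,2$ by hand to make the pattern transparent before invoking the operator factorisation.

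With $x=t-\alpha$ this turns the left-hand side into $\sum_{n=0}^{k}(-1)^n\binom{k}{n}\nabla^n g(t-\alpha)=(I-\nabla)^k g(t-\alpha)$. The second and final algebraic step is the observation $I-\nabla=E^{-1}$, whence $(I-\nabla)^k=E^{-k}$ and the whole sum collapses to $E^{-k}g(t-\alpha)=g(t-\alpha-k)$, which is exactly the claimed right-hand side.

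The point that genuinely needs care — and which I expect to be the main obstacle — is not the algebra but the domain bookkeeping, i.e.\ making sure every evaluation of $g$ that appears lands in $\mathbb{N}_a$. Expanding $\Delta^n g(t-\alpha-n)$ calls $g$ at the arguments $t-\alpha-n,\dots,t-\alpha$, so over all $n\in\{0,\dots,k\}$ the smallest argument is $t-\alpha-k$. For $t\in\mathbb{N}_{a+\alpha+k}$ one has $t=a+\alpha+k+\ell$ with $\ell\in\mathbb{N}_0$, so each such argument equals $a+(k-n+j)+\ell$ with $0\le j\le n\le k$, hence is an integer $\ge a$; this is precisely why the hypothesis is stated as $t\in\mathbb{N}_{a+\alpha+k}$. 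I would state this verification explicitly, so that all the operator identities above are legitimate identities between real numbers rather than merely formal ones.

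As a sanity check and independent route, one can avoid the backward-difference language entirely: substitute the explicit expansion $\Delta^n g(x)=\sum_{j=0}^{n}(-1)^{n-j}\binom{n}{j}g(x+j)$ into the sum, reindex, swap the two summations, and use $\binom{k}{n}\binom{n}{i}=\binom{k}{i}\binom{k-i}{n-i}$ together with $\sum_{m=0}^{k-i}(-1)^m\binom{k-i}{m}=0$ for $i<k$ to annihilate every term except $i=k$. This double-sum computation reproduces $g(t-\alpha-k)$ and serves as a useful confirmation, though the operator argument is shorter and more illuminating.
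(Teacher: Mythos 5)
Your proof is correct, but it follows a genuinely different route from the paper's. The paper argues by induction on $k$: the base case $k=0$ is trivial, and the inductive step uses Pascal's rule $\binom{k+1}{n}=\binom{k}{n}+\binom{k}{n-1}$ to split the order-$(k+1)$ sum into the order-$k$ sum applied to $g$ minus the order-$k$ sum applied to $\Delta g$ at $t-1$, which the inductive hypothesis reduces to $g(t-\alpha-k)-\Delta g(t-1-\alpha-k)=g(t-\alpha-(k+1))$. You instead read the left-hand side as an operator identity: the argument shift turns $\Delta^n g(t-\alpha-n)$ into $\nabla^n g(t-\alpha)$ (since $E^{-n}(E-I)^n=(I-E^{-1})^n$ for the commuting shift $E$), the alternating binomial sum becomes $(I-\nabla)^k$, and $I-\nabla=E^{-1}$ collapses everything to $g(t-\alpha-k)$. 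Both arguments are elementary and of comparable length; yours explains \emph{why} the identity holds (it is the expansion of $E^{-k}=(I-\nabla)^k$ read off coefficientwise), whereas the paper's induction is self-contained and deliberately parallels the Pascal-rule induction already used for the discrete binomial theorem \eqref{eq0}. Your explicit check that every evaluation of $g$ lands in $\mathbb{N}_a$ precisely because $t\in\mathbb{N}_{a+\alpha+k}$ is a genuine addition --- the paper's proof leaves this implicit, even though tracking domains is a theme it emphasises elsewhere (cf.\ Remark \ref{rem0001}) --- and your double-sum verification via $\binom{k}{n}\binom{n}{i}=\binom{k}{i}\binom{k-i}{n-i}$ is also sound.
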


\begin{proof}
We use induction on $k$. So, let $k=0$ and $t\in\mathbb{N}_{a+\alpha}$. Then the equality trivially holds. Now, assume the equality holds for $k\in\mathbb{N}_0$ and let $t\in\mathbb{N}_{a+\alpha+k+1}$. We have,
\begin{align*}
    &\sum_{n=0}^{k+1}(-1)^n\binom{k+1}{n}\Delta^ng(t-\alpha-n)\\
    &=g(t-\alpha)+(-1)^{k+1}\Delta^{k+1}g(t-\alpha-(k+1))\\
    &\hspace{4cm}+\sum_{n=1}^{k}(-1)^n\left[\binom{k}{n}+\binom{k}{n-1}\right]\Delta^ng(t-\alpha-n)\\
    &=\sum_{n=0}^{k}(-1)^n\binom{k}{n}\Delta^ng(t-\alpha-n)-\sum_{n=0}^{k}(-1)^n\binom{k}{n}\Delta^n\Delta g(t-1-\alpha-n)\\
    &=g(t-\alpha-k)-\Delta g(t-1-\alpha-k)\\
   &=g(t-\alpha-(k+1)),
\end{align*}
and the proof is done.
\end{proof}
The following result was proved for the first time in the seminal work by Miller and Ross \cite{MillerRoss} and later in \cite{Atici3}. In both works it is assumed that the function $g$ is defined in a union of discrete domains. Here, $g$ is defined on $\mathbb{N}_a$.

\begin{theorem}[Leibniz rule]\label{thmL-R}
Suppose that $f,g:\mathbb{N}_a\to\mathbb{R}$ and $\alpha\in\mathbb{R}\backslash\mathbb{N}^{0}$. Then,
\begin{equation}\label{FractLeiRule}
    \Delta_a^{-\alpha}[fg](t)=\sum_{n=0}^{t-\alpha-a}\binom{-\alpha}{n}\Delta_a^{-(\alpha+n)}f(t)\cdot\Delta^ng(t-\alpha-n),\quad t\in\mathbb{N}_{a+\alpha}.
\end{equation}
\end{theorem}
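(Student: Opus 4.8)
The plan is to start from the definition \eqref{fractsumdiff} of the fractional sum, to expand the factor $g(s)$ by means of the preparation Lemma \ref{lem3.18-1}, and then to interchange the two (finite) summations so as to recognise the inner sum as a multiple of $\Delta_a^{-(\alpha+n)}f(t)$. Concretely, for $t\in\mathbb{N}_{a+\alpha}$ I would write
$$\Delta_a^{-\alpha}[fg](t)=\frac{1}{\Gamma(\alpha)}\sum_{s=a}^{t-\alpha}(t-s-1)^{\underline{\alpha-1}}f(s)g(s),$$
and observe that for each admissible $s$ the integer $k:=t-\alpha-s$ lies in $\mathbb{N}_0$. Applying Lemma \ref{lem3.18-1} with this value of $k$ (so that $t-\alpha-k=s$ and the hypothesis $t\in\mathbb{N}_{a+\alpha+k}$ reduces to the true statement $s\geq a$) yields
$$g(s)=\sum_{n=0}^{t-\alpha-s}(-1)^n\binom{t-\alpha-s}{n}\Delta^ng(t-\alpha-n).$$

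Next I would substitute this expansion into the first display and exchange the order of summation. The pair $(s,n)$ ranges over the triangular set $\{\,a\leq s\leq t-\alpha,\ 0\leq n\leq t-\alpha-s\,\}$, which coincides with $\{\,0\leq n\leq t-\alpha-a,\ a\leq s\leq t-\alpha-n\,\}$, so the swap is nothing more than a finite reindexing and produces
$$\Delta_a^{-\alpha}[fg](t)=\sum_{n=0}^{t-\alpha-a}\Delta^ng(t-\alpha-n)\left[\frac{(-1)^n}{\Gamma(\alpha)}\sum_{s=a}^{t-\alpha-n}(t-s-1)^{\underline{\alpha-1}}\binom{t-\alpha-s}{n}f(s)\right].$$
The outer index range $0\leq n\leq t-\alpha-a$ already matches the one in \eqref{FractLeiRule}, and along the way I would check that every operator occurring lies within its stated domain (in particular $t\in\mathbb{N}_{a+\alpha+n}$ holds throughout this range).

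It then remains to establish the term-by-term identity
$$\frac{(-1)^n}{\Gamma(\alpha)}(t-s-1)^{\underline{\alpha-1}}\binom{t-\alpha-s}{n}=\binom{-\alpha}{n}\frac{1}{\Gamma(\alpha+n)}(t-s-1)^{\underline{\alpha+n-1}},$$
for this collapses the bracketed inner sum precisely into $\binom{-\alpha}{n}\Delta_a^{-(\alpha+n)}f(t)$ and finishes the proof. To verify it I would set $u=t-s-1$, apply the factorisation \eqref{eq1} in the form $u^{\underline{\alpha+n-1}}=(u-\alpha+1)^{\underline{n}}\,u^{\underline{\alpha-1}}$ together with $\binom{u-\alpha+1}{n}=(u-\alpha+1)^{\underline{n}}/n!$, and cancel the common factor $u^{\underline{\alpha-1}}$; what survives is the elementary relation $\binom{-\alpha}{n}=(-1)^n(\alpha)_n/n!=(-1)^n\Gamma(\alpha+n)/(\Gamma(\alpha)\,n!)$.

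I expect the genuinely delicate points to be bookkeeping rather than analysis: since every sum is finite there are no convergence questions, so the care goes into (i) matching the index $k$ of Lemma \ref{lem3.18-1} to the summation variable $s$ and confirming its domain restriction for \emph{every} admissible $s$, and (ii) ensuring the falling-factorial splitting \eqref{eq1} remains valid for the possibly non-integer exponents $\alpha-1$ and $\alpha+n-1$, which is exactly where the hypothesis $\alpha\in\mathbb{R}\backslash\mathbb{N}^{0}$ and the precise conventions of Definition \ref{def1} must be invoked.
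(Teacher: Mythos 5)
Your proposal is correct and follows essentially the same route as the paper's own proof: expand $g(s)$ via Lemma \ref{lem3.18-1} with $k=t-\alpha-s$, interchange the two finite sums over the triangular index set, and absorb the factor $(-1)^n\binom{t-\alpha-s}{n}$ into $\binom{-\alpha}{n}$ and the kernel $(t-s-1)^{\underline{\alpha+n-1}}$ using \eqref{eq1} and $(-\alpha)^{\underline{n}}=(-1)^n(\alpha)_n$. The only difference is that you spell out the term-by-term identity that the paper compresses into ``after some cancellations.''
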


\begin{proof}
We start by fixing $t\in\mathbb{N}_{a+\alpha}$. By Lemma \ref{lem3.18-1}, we have
$$\sum_{n=0}^{t-s-\alpha}(-1)^n\binom{t-s-\alpha}{n}\Delta^ng(t-\alpha-n)=g(s),\quad s\in\mathbb{N}_{a}^{t-\alpha}.$$
Therefore,
\begin{align*}
     &\Delta_a^{-\alpha}[fg](t)\\
     &=\frac{1}{\Gamma(\alpha)}\sum_{s=a}^{t-\alpha}(t-(s+1))^{\underline{\alpha-1}}f(s)g(s)\\
     &=\frac{1}{\Gamma(\alpha)}\sum_{s=a}^{t-\alpha}\sum_{n=0}^{t-s-\alpha}(t-(s+1))^{\underline{\alpha-1}}(-1)^n\binom{t-s-\alpha}{n}f(s)\Delta^ng(t-\alpha-n)\\
     &=\frac{1}{\Gamma(\alpha)}\sum_{n=0}^{t-a-\alpha}\sum_{s=a}^{t-\alpha-n}(t-(s+1))^{\underline{\alpha-1}}(-1)^n\binom{t-s-\alpha}{n}f(s)\Delta^ng(t-\alpha-n).
\end{align*}
Now, using the equality $(-1)^n=\frac{(-\alpha)^{\underline{n}}}{(\alpha)_{n}}$, we get from the previous deduction and after some cancellations that,
\begin{align*}
    &\frac{1}{\Gamma(\alpha)}\sum_{n=0}^{t-a-\alpha}\sum_{s=a}^{t-\alpha-n}(t-(s+1))^{\underline{\alpha-1}}(-1)^n\binom{t-s-\alpha}{n}f(s)\Delta^ng(t-\alpha-n)\\
    &=\sum_{n=0}^{t-a-\alpha}\frac{(-\alpha)^{\underline{n}}}{n!}\frac{1}{\Gamma(n+\alpha)}\sum_{s=a}^{t-(\alpha+n)}(t-(s+1))^{\underline{\alpha+n-1}}f(s)\Delta^ng(t-\alpha-n)\\
    &=\sum_{n=0}^{t-a-\alpha}\binom{-\alpha}{n}\Delta_a^{-(\alpha+n)}f(t)\cdot\Delta^ng(t-\alpha-n),
\end{align*}
which concludes the proof.
\end{proof}

As a first consequence of Theorem \ref{thmL-R} we prove the following result.

\begin{proposition}\label{prop11111}
Consider the parameters $\alpha,\beta,\gamma$ such that $\alpha\in\mathbb{R}\backslash\mathbb{N}^0$, $\beta,\beta+\gamma\in\mathbb{R}\backslash\mathbb{N}^{-1}$. Then
\begin{multline}\label{form1}
\frac{\Gamma(\beta+\gamma+1)}{(t-(\beta+\gamma)-\alpha)!}\frac{(\beta+\gamma+\alpha+1)_{t-(\beta+\gamma)-\alpha}}{\Gamma(\beta+1)}\\
=\sum_{n=0}^{t-\alpha-\beta-\gamma}\binom{-\alpha}{n}\frac{(\alpha+\beta+n+1)_{(t-\alpha-\beta-\gamma-n)}\gamma^{\underline{n}}(t-\alpha-n)^{\underline{\gamma-n}}}{(t-\alpha-\beta-\gamma-n)!},\quad t\in\mathbb{N}_{\alpha+\beta+\gamma}.
\end{multline}
\end{proposition}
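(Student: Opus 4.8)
The plan is to apply the Leibniz rule (Theorem~\ref{thmL-R}) to a carefully chosen product $fg$ for which both sides can be evaluated in closed form using the fractional power rule. Specifically, I would set $a\to\beta$ (so that the base point of the fractional operator matches the exponent structure), take $f(s)\equiv 1$ and $g(s)=(s-\beta)^{\underline{\gamma}}$, and consider $\Delta_{\beta}^{-\alpha}[fg](t)$. The left-hand side of \eqref{form1} should then emerge by applying Theorem~\ref{sumPR} directly to the product $fg=(s-\beta)^{\underline{\gamma}}$, which is itself a falling factorial; the right-hand side should emerge from the Leibniz expansion, where each summand contains $\Delta_{\beta}^{-(\alpha+n)}f(t)$ and $\Delta^n g(t-\alpha-n)$.

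First I would compute the right-hand side ingredients. Since $g(s)=(s-\beta)^{\underline{\gamma}}$, the iterated ordinary difference is $\Delta^n g(t) = \gamma^{\underline{n}}(t-\beta)^{\underline{\gamma-n}}$ by the power rule $\Delta[s^{\underline{\alpha}}](t)=\alpha t^{\underline{\alpha-1}}$ applied $n$ times, so that $\Delta^n g(t-\alpha-n) = \gamma^{\underline{n}}(t-\alpha-n-\beta)^{\underline{\gamma-n}}$. This is where I expect a delicate bookkeeping point: I need the shifted argument in \eqref{form1}, namely $(t-\alpha-n)^{\underline{\gamma-n}}$, to coincide with what the Leibniz rule produces, which after accounting for the base point $a=\beta$ will read $(t-\alpha-n-\beta)^{\underline{\gamma-n}}$ — so I must verify that the intended statement is being read with the base point folded into $t$, or else make the substitution $a=\beta$ explicit and reconcile the arguments. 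For the fractional-sum factor, I would apply Theorem~\ref{sumPR} with $f\equiv 1$: writing $f(s)=(s-\beta)^{\underline{0}}$ I get $\Delta_{\beta}^{-(\alpha+n)}[1](t)=\frac{\Gamma(1)}{(t-\beta-\alpha-n)!}(\alpha+n+1)_{t-\beta-\alpha-n}$, which supplies exactly the Pochhammer factor $(\alpha+\beta+n+1)_{\,t-\alpha-\beta-\gamma-n}$ and the factorial denominator appearing in the sum, again modulo the shift by $\beta$.

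Next I would compute the left-hand side. Applying Theorem~\ref{sumPR} to $\Delta_{\beta}^{-\alpha}[(s-\beta)^{\underline{\gamma}}](t)$ with $\mu=\gamma$ and $\nu=\alpha$ (and base point $a=\beta$), I obtain $\frac{\Gamma(\gamma+1)}{(t-\beta-\gamma-\alpha)!}(\gamma+\alpha+1)_{t-\beta-\gamma-\alpha}$. Comparing with the stated left side of \eqref{form1}, the factor $\Gamma(\beta+\gamma+1)/\Gamma(\beta+1)$ there signals that the exponent in $g$ should in fact be $(s)_{\gamma}$ type relative to a $\beta$-shift, i.e.\ that I should choose $g(s)=(s-\beta)^{\underline{\beta+\gamma}}$ divided appropriately, or equivalently take the exponent so that $\mu+1=\beta+1$ produces $\Gamma(\beta+\gamma+1)/\Gamma(\beta+1)$ via $\Gamma(\mu+1)=\Gamma(\beta+\gamma+1)$ together with a normalizing $\Gamma(\beta+1)$ from the definition of $g$. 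Reverse-engineering the precise $f$ and $g$ so that \textbf{both} prefactors match the stated identity is the main obstacle: the identity is really the equation ``$\Delta_{\beta}^{-\alpha}[1\cdot g]=$ Leibniz expansion of $\Delta_{\beta}^{-\alpha}[1\cdot g]$,'' and the entire content is that the two evaluations of the same operator agree, so the proof reduces to (i) pinning down $g$ from the left side, (ii) confirming $\Delta^n g(t-\alpha-n)$ and $\Delta_{\beta}^{-(\alpha+n)}[1](t)$ via Theorems~\ref{sumPR} and the ordinary power rule, and (iii) checking the domains $t\in\mathbb{N}_{\alpha+\beta+\gamma}$ and the parameter restrictions $\beta,\beta+\gamma\notin\mathbb{N}^{-1}$ guarantee every Gamma factor is finite and that $\mu+\nu\notin\mathbb{N}^{-1}$ so Corollary~\ref{Cor1} is not triggered. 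The substantive check is step (iii) together with the argument-shift reconciliation; once those are settled, \eqref{form1} is just the Leibniz rule specialized and simplified, with no genuinely new computation required.
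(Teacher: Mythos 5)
Your overall strategy---apply the Leibniz rule (Theorem~\ref{thmL-R}) to a product $fg$ whose fractional sums are all computable via the power rule (Theorem~\ref{sumPR})---is exactly the paper's, but the proposal has a genuine gap: you never produce a pair $f,g$ that actually works, and the pair you start from cannot work. With $f\equiv 1$ (i.e.\ $\mu=0$ in Theorem~\ref{sumPR}) the factor $\Delta^{-(\alpha+n)}[1](t)$ has Pochhammer base $\mu+\nu+1=\alpha+n+1$, whereas \eqref{form1} requires $(\alpha+\beta+n+1)_{\cdot}$; the base of the Pochhammer symbol is fixed by the exponent of $f$'s falling factorial, so no relabelling or shift of $t$ can repair this --- $f$ is forced to be a falling power of degree $\beta$. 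Likewise your $g(s)=(s-\beta)^{\underline{\gamma}}$ yields $\Delta^n g(t-\alpha-n)=\gamma^{\underline{n}}(t-\alpha-n-\beta)^{\underline{\gamma-n}}$, which disagrees with the factor $(t-\alpha-n)^{\underline{\gamma-n}}$ in \eqref{form1}. You correctly flag both mismatches, but then declare that ``reverse-engineering the precise $f$ and $g$ \ldots is the main obstacle'' without resolving it; that reverse-engineering \emph{is} the content of the proof.

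The missing idea is the factorization supplied by \eqref{eq1}: with base point $a=\beta+\gamma$, write
\begin{equation*}
t^{\underline{\beta+\gamma}}=(t-\gamma)^{\underline{\beta}}\,t^{\underline{\gamma}},\qquad f(t)=(t-\gamma)^{\underline{\beta}},\quad g(t)=t^{\underline{\gamma}}.
\end{equation*}
Then Theorem~\ref{sumPR} applied to $h(t)=t^{\underline{\beta+\gamma}}$ (with $\mu=\beta+\gamma$, $\nu=\alpha$) gives the left side of \eqref{form1} times $\Gamma(\beta+1)$; applied to $f$ (with $a=\gamma$, $\mu=\beta$, $\nu=\alpha+n$) it gives $\frac{\Gamma(\beta+1)}{(t-\alpha-\beta-\gamma-n)!}(\alpha+\beta+n+1)_{t-\alpha-\beta-\gamma-n}$, which is the correct Pochhammer base and also explains the $\Gamma(\beta+1)$ in the denominator of the left side after cancellation; and $\Delta^n g(t-\alpha-n)=\gamma^{\underline{n}}(t-\alpha-n)^{\underline{\gamma-n}}$ with no spurious $\beta$-shift. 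Your step (iii) (domains and parameter restrictions) is a reasonable checklist, but as written the proof does not go through.
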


\begin{proof}
Consider the function $h(t)=t^{\underline{\beta+\gamma}}$ on $t\in\mathbb{N}_{\beta+\gamma}$. By Theorem \ref{sumPR}, we have
\begin{equation*}
    \Delta_{\beta+\gamma}^{-\alpha}h(t)=\frac{\Gamma(\beta+\gamma+1)}{(t-(\beta+\gamma)-\alpha)!}(\beta+\gamma+\alpha+1)_{t-(\beta+\gamma)-\alpha},\mbox{ for } t\in\mathbb{N}_{\alpha+\beta+\gamma}.
\end{equation*}
Now consider the functions $f,g:\mathbb{N}_{\beta+\gamma}\to\mathbb{R}$ defined by
$$f(t)=(t-\gamma)^{\underline{\beta}},\ g(t)=t^{\underline{\gamma}}.$$
Then for $t\in\mathbb{N}_{\alpha+\beta+\gamma}$ and $n\in\mathbb{N}_0^{t-(\beta+\gamma)-\alpha}$,
\begin{align*}
    \Delta_{\beta+\gamma}^{-(\alpha+n)}f(t)&=\frac{\Gamma(\beta+1)}{(t-\alpha-\beta-\gamma-n)!}(\alpha+\beta+n+1)_{(t-\alpha-\beta-\gamma-n)},\\
    \Delta^n g(t)&=\gamma^{\underline{n}}t^{\underline{\gamma-n}}.
\end{align*}
Theorem \ref{thmL-R} now implies that,
\begin{multline*}
\frac{\Gamma(\beta+\gamma+1)}{(t-(\beta+\gamma)-\alpha)!}(\beta+\gamma+\alpha+1)_{t-(\beta+\gamma)-\alpha}\\
=\sum_{n=0}^{t-\alpha-\beta-\gamma}\binom{-\alpha}{n}\frac{\Gamma(\beta+1)}{(t-\alpha-\beta-\gamma-n)!}(\alpha+\beta+n+1)_{(t-\alpha-\beta-\gamma-n)}\gamma^{\underline{n}}(t-\alpha-n)^{\underline{\gamma-n}},
\end{multline*}
for $t\in\mathbb{N}_{\alpha+\beta+\gamma}$, and \eqref{form1} follows immediately. 
\end{proof}

\begin{corollary}[Saalschutz's formula]
Suppose $a,c\in\mathbb{R}\backslash\mathbb{N}^0$ with $c-a-1\in\mathbb{R}\backslash\mathbb{N}^{-1}$, and $b\in\mathbb{R}$ with $c-a-b-1\in\mathbb{R}\backslash\mathbb{N}^{-1}$. Then for $m\in\mathbb{N}_0$
\begin{equation}\label{Salsc}
\frac{(c-a)_m(c-b)_m}{(c)_m(c-a-b)_m}={_3F}_2(a,b,-m;c,1+a+b-c-m;1).
\end{equation}
\end{corollary}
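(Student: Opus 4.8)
The plan is to obtain \eqref{Salsc} as a direct specialization of Proposition \ref{prop11111}. Observe first that the right-hand side of \eqref{form1} is a terminating sum, running from $n=0$ to $t-\alpha-\beta-\gamma$, which is exactly the behaviour of a $_3F_2$ carrying a non-positive integer numerator parameter; so no convergence issue will arise. The first step is therefore to fix $m=t-\alpha-\beta-\gamma\in\mathbb{N}_0$ (equivalently $t=\alpha+\beta+\gamma+m$) and to adopt the dictionary $\alpha=a$, $\beta=c-a-1$, $\gamma=-b$. With this choice the hypotheses of Proposition \ref{prop11111}, namely $\alpha\in\mathbb{R}\backslash\mathbb{N}^0$ and $\beta,\beta+\gamma\in\mathbb{R}\backslash\mathbb{N}^{-1}$, become precisely $a\in\mathbb{R}\backslash\mathbb{N}^0$, $c-a-1\in\mathbb{R}\backslash\mathbb{N}^{-1}$ and $c-a-b-1\in\mathbb{R}\backslash\mathbb{N}^{-1}$, while $c\in\mathbb{R}\backslash\mathbb{N}^0$ is the condition ensuring the factor $(c)_k$ in the series never vanishes.

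Second, I would rewrite the $n$-th summand of \eqref{form1} entirely in terms of Gamma functions and separate off the part that does not depend on $n$. Using $\binom{-\alpha}{n}=(-1)^n(\alpha)_n/n!$, the identity $\gamma^{\underline{n}}=(-1)^n(-\gamma)_n$, and $(m+\beta+\gamma-n)^{\underline{\gamma-n}}=\Gamma(m+\beta+\gamma-n+1)/\Gamma(m+\beta+1)$, the two signs cancel and the summand collapses to
$$C\cdot\frac{(\alpha)_n(-\gamma)_n(-m)_n}{(\alpha+\beta+1)_n(-m-\beta-\gamma)_n\,n!},\quad C=(\alpha+\beta+1)_m\,\frac{\Gamma(m+\beta+\gamma+1)}{\Gamma(m+\beta+1)\,m!}.$$
Reading off the numerator parameters $\alpha,-\gamma,-m$ and the denominator parameters $\alpha+\beta+1,-m-\beta-\gamma$ and translating through the dictionary gives exactly $a,b,-m$ and $c,1+a+b-c-m$; hence the right-hand side of \eqref{form1} equals $C\cdot{_3F_2}(a,b,-m;c,1+a+b-c-m;1)$.

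Finally, the left-hand side of \eqref{form1} equals, after the substitution, $\Gamma(\beta+\gamma+1)(\alpha+\beta+\gamma+1)_m/\big(m!\,\Gamma(\beta+1)\big)$. Dividing this by $C$ and expanding the two Pochhammer symbols $(\alpha+\beta+\gamma+1)_m$ and $(\alpha+\beta+1)_m$ as Gamma quotients, all factorials cancel and one is left with $\Gamma(c-a-b)\Gamma(m+c-a)/\big(\Gamma(c-a)\Gamma(m+c-a-b)\big)$ times $\Gamma(c)\Gamma(m+c-b)/\big(\Gamma(c-b)\Gamma(m+c)\big)$. Using $\Gamma(x+m)/\Gamma(x)=(x)_m$ four times, this is $(c-a)_m(c-b)_m/\big((c)_m(c-a-b)_m\big)$, which is the left-hand side of \eqref{Salsc}.

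I expect the main obstacle to be the Gamma-function bookkeeping in the second step, specifically the $n$-dependent factor $\Gamma(m+\beta+\gamma+1-n)/\Gamma(m+1-n)$ coming from $(m+\beta+\gamma-n)^{\underline{\gamma-n}}/(m-n)!$. To turn it into a genuine Pochhammer ratio I would invoke the reflection-type relation $\Gamma(X-n)/\Gamma(X)=(-1)^n/(1-X)_n$ with $X=m+\beta+\gamma+1$ and with $X=m+1$; this is exactly what manufactures the $(-m)_n$ in the numerator and the $(-m-\beta-\gamma)_n$ in the denominator and so makes the $_3F_2$ visible. The remaining manipulations are careful but routine cancellations of Gamma factors.
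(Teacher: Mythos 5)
Your proposal is correct and follows essentially the same route as the paper: it specializes Proposition \ref{prop11111} with the identical dictionary $\alpha=a$, $\beta=c-a-1$, $\gamma=-b$, $m=t-\alpha-\beta-\gamma$, converts the summand to Pochhammer form via $\binom{-\alpha}{n}=(-1)^n(\alpha)_n/n!$ and the reflection identities that produce $(-m)_n$ and $(-m-\beta-\gamma)_n$, and then matches the prefactors to $(c-a)_m(c-b)_m/\big((c)_m(c-a-b)_m\big)$. The only cosmetic difference is that the paper carries the $n$-independent constants across the equality step by step rather than isolating a single constant $C$, and it states the extra hypothesis $\alpha+\beta\in\mathbb{R}\backslash\mathbb{N}^{-1}$ explicitly, which is your observation that $c\notin\mathbb{N}^0$.
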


\begin{proof}
Consider the hypothesis of Proposition \ref{prop11111}, together with $\alpha+\beta\in\mathbb{R}\backslash\mathbb{N}^{-1}$. Put $a=\alpha$, $b=-\gamma$, $c=1+\alpha+\beta$ and $m=t-\alpha-\beta-\gamma$.

Then \eqref{form1} may be written as
\begin{multline*}
\frac{\Gamma(\beta+\gamma+1)}{m!}\frac{(\beta+\gamma+\alpha+1)_{m}}{\Gamma(\beta+1)}\\
=\sum_{n=0}^{t-\alpha-\beta-\gamma}\binom{-\alpha}{n}\frac{(\alpha+\beta+n+1)_{(t-\alpha-\beta-\gamma-n)}\gamma^{\underline{n}}(t-\alpha-n)^{\underline{\gamma-n}}}{(t-\alpha-\beta-\gamma-n)!}\\
=\frac{\Gamma(1+\alpha+\beta+m)}{\Gamma(m+\beta+1)}\sum_{n=0}^{m}\frac{(\alpha)_n}{n!}\frac{(-\gamma)_{n}\Gamma(m+\beta+1+\gamma-n)}{\Gamma(\alpha+\beta+n+1)(m-n)!},
\end{multline*}
where we have used $(-1)^n(\alpha)_{n}=(-\alpha)^{\underline{n}}$. Now, after some rearrangements, we get
\begin{multline*}
\frac{\Gamma(\beta+\gamma+1)}{\Gamma(1+\alpha+\beta+m)}\frac{(\beta+\gamma+\alpha+1)_{m}}{\Gamma(\beta+1)}\Gamma(m+\beta+1)\\
=\frac{\Gamma(1+\beta+\gamma+m)}{\Gamma(1+\alpha+\beta)}\sum_{n=0}^{m}\frac{(\alpha)_{n}(-\gamma)_{n}(-m)_n}{(1+\alpha+\beta)_n(-\beta-\gamma-m)_{n}}\frac{1}{n!},
\end{multline*}
or
\begin{multline*}
\frac{\Gamma(\beta+\gamma+1)\Gamma(1+\alpha+\beta)}{\Gamma(1+\alpha+\beta+m)}\frac{(\beta+\gamma+\alpha+1)_{m}\Gamma(m+\beta+1)}{\Gamma(\beta+1)\Gamma(1+\beta+\gamma+m)}\\
=\sum_{n=0}^{m}\frac{(\alpha)_{n}(-\gamma)_{n}(-m)_n}{(1+\alpha+\beta)_n(-\beta-\gamma-m)_{n}}\frac{1}{n!}.
\end{multline*}
Now, note that
\begin{align*}
    (c-a)_m&=\frac{\Gamma(1+\beta+m)}{\Gamma(1+\beta)}\\
    (c-b)_m&=\frac{\Gamma(1+\alpha+\beta+\gamma+m)}{\Gamma(1+\alpha+\beta+\gamma)}\\
     (c)_m&=\frac{\Gamma(1+\alpha+\beta+m)}{\Gamma(1+\alpha+\beta)}\\
      (c-a-b)_m&=\frac{\Gamma(1+\beta+\gamma+m)}{\Gamma(1+\beta+\gamma)}.
\end{align*}
The equality \eqref{Salsc} now follows from the arbitrariness of $\alpha,\beta,\gamma$ and $t$.
\end{proof}

\section*{Acknowledgments}
The author would like to thank the referees for their careful reading of the manuscript, and their corrections and suggestions which contributed to improve this article.

\bibliographystyle{amsplain}

\end{document}